\newtheorem{theorem}{Theorem}
\newtheorem{lemma}{Lemma}
\newtheorem{proposition}{Proposition}
\theoremstyle{definition}
\theoremstyle{remark}
\newtheorem{remark}{Remark}
\DeclareMathOperator{\diag}{diag}
\DeclareMathOperator{\tr}{tr}
\def\E{\mathbb{E}}
\def\R{\mathbb{R}}
\def\Sig{\varSigma}
\def\veps{\varepsilon}
\title{A tail inequality for quadratic forms of subgaussian random vectors}
\author[1]{Daniel Hsu}
\author[1,2]{Sham M.~Kakade}
\author[3]{Tong Zhang}
\affil[1]{Microsoft Research New England}
\affil[2]{Department of Statistics, Wharton School, University of Pennsylvania}
\affil[3]{Department of Statistics, Rutgers University}
\begin{document}
\maketitle
{\def\thefootnote{}
\footnotetext{E-mail: \texttt{dahsu@microsoft.com},
\texttt{skakade@wharton.upenn.edu}, \texttt{tzhang@stat.rutgers.edu}}}

\begin{abstract}
We prove an exponential probability tail inequality for positive
semidefinite quadratic forms in a subgaussian random vector.
The bound is analogous to one that holds when the vector has independent
Gaussian entries.
\end{abstract}

\section{Introduction}

Suppose that $x=(x_1,\dotsc,x_n)$ is a random vector.  Let $A \in
\R^{m \times n}$ be a fixed matrix. A natural quantity that arises in many
settings is the quadratic form $\|Ax\|^2 = x^\top (A^\top A) x$.
Throughout $\|v\|$ denotes the Euclidean norm of a vector $v$, and
$\|M\|$ denotes the spectral (operator) norm of a matrix $M$.
We are interested in how close $\|Ax\|^2 $ is to its expectation.

Consider the special case where $x_1,\ldots,x_n$ are independent standard
Gaussian random variables.
The following proposition provides an (upper) tail bound for $\|Ax\|^2$.

\begin{proposition}
\label{proposition:quadratic-gaussian}
Let $A \in \R^{m \times n}$ be a matrix, and let $\Sig := A^\top A$.
Let $x = (x_1,\dotsc,x_n)$ be an isotropic multivariate Gaussian random
vector with mean zero.
For all $t > 0$,
\[
\Pr\left[ \|Ax\|^2 > \tr(\Sig) + 2\sqrt{\tr(\Sig^2) t} +
2\|\Sig\| t \right]
\leq e^{-t}
.
\]
\end{proposition}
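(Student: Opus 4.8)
The plan is to reduce the quadratic form to a weighted sum of independent $\chi^2_1$ variables and then run the standard Chernoff (moment-generating-function) argument, choosing the MGF parameter so that the three terms in the threshold fall out cleanly.

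First I would diagonalize $\Sig = A^\top A$. Since $\Sig$ is positive semidefinite, write $\Sig = U\diag(\lambda_1,\dotsc,\lambda_n)U^\top$ with $U$ orthogonal and $\lambda_i \ge 0$. Because $x$ is isotropic Gaussian with mean zero, $U^\top x$ has the same distribution as $x$, so $\|Ax\|^2 = x^\top\Sig x$ has the same distribution as $\sum_{i=1}^n \lambda_i z_i^2$ where $z_1,\dotsc,z_n$ are i.i.d.\ standard Gaussian. Here $\tr(\Sig)=\sum_i\lambda_i$, $\tr(\Sig^2)=\sum_i\lambda_i^2$, and $\|\Sig\|=\max_i\lambda_i=:\lambda$; if $\Sig=0$ the claim is trivial, so assume $\lambda>0$.

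Next, for $0\le\eta<1/(2\lambda)$, independence and the Gaussian MGF identity $\E[e^{\eta\lambda_i z_i^2}]=(1-2\eta\lambda_i)^{-1/2}$ give, via Markov's inequality applied to $e^{\eta\sum_i\lambda_i z_i^2}$,
\[
\log\Pr\Bigl[\textstyle\sum_i\lambda_i z_i^2 > u\Bigr] \le -\eta u - \tfrac12\textstyle\sum_i\log(1-2\eta\lambda_i).
\]
I would then bound each log term by its power series, $-\tfrac12\log(1-2\eta\lambda_i)\le \eta\lambda_i + \frac{\eta^2\lambda_i^2}{1-2\eta\lambda}$ (using $\tfrac12(v^2/2+v^3/3+\cdots)\le\frac{v^2/2}{1-v}$ with $v=2\eta\lambda_i\le 2\eta\lambda$), and sum over $i$ to obtain
\[
\log\Pr\Bigl[\textstyle\sum_i\lambda_i z_i^2 > u\Bigr] \le -\eta\bigl(u-\tr(\Sig)\bigr) + \frac{\eta^2\tr(\Sig^2)}{1-2\eta\lambda}.
\]

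Finally I would set $u=\tr(\Sig)+2\sqrt{\tr(\Sig^2)t}+2\lambda t$ and take $\eta=\sqrt t/(\sqrt{\tr(\Sig^2)}+2\lambda\sqrt t)$, which satisfies $\eta<1/(2\lambda)$. A short computation shows this choice makes $\frac{\eta^2\tr(\Sig^2)}{1-2\eta\lambda}=\eta\sqrt{\tr(\Sig^2)t}$ and $\eta(u-\tr(\Sig))=\eta\sqrt{\tr(\Sig^2)t}+t$, so the right-hand side collapses to exactly $-t$, giving the claim. There is no genuine obstacle here; the only mild subtlety is selecting $\eta$ so that all three pieces of the threshold balance at once (one could instead optimize $\eta$ exactly, but the explicit choice above is cleaner and loses nothing), and positive semidefiniteness of $\Sig$ is precisely what ensures $\lambda_i\ge 0$ so that the MGF is finite on a nonempty interval.
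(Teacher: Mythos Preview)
Your proof is correct and follows essentially the same route as the paper: diagonalize $\Sig$ via rotational invariance to reduce $\|Ax\|^2$ to a weighted sum of independent $\chi^2_1$ variables, then apply the Laurent--Massart tail bound. The only difference is that the paper invokes that bound as a black box (Lemma~\ref{lemma:chi2-tail}), whereas you reprove it inline via the MGF/Chernoff argument with the explicit choice of $\eta$; your series estimate and balancing of $\eta$ check out exactly.
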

The proof, given in Appendix~\ref{appendix:gaussian}, is straightforward given
the rotational invariance of the multivariate Gaussian distribution,
together with a tail bound for linear combinations of $\chi^2$ random
variables due to~\citet{LauMas00}.
We note that a slightly weaker form of
Proposition~\ref{proposition:quadratic-gaussian} can be proved directly
using Gaussian concentration~\citep{Pisier89}.

In this note, we consider the case where $x = (x_1,\dotsc,x_n)$ is a
\emph{subgaussian} random vector.
By this, we mean that there exists a $\sigma \geq 0$, such that for all
$\alpha \in \R^n$,
\begin{equation*}
\E\left[ \exp\left(\alpha^\top x\right) \right]
\leq \exp\left(\|\alpha\|^2 \sigma^2/2\right)
.
\end{equation*}
We provide a sharp upper tail bound for this case analogous to one that
holds in the Gaussian case (indeed, the same as
Proposition~\ref{proposition:quadratic-gaussian} when $\sigma = 1$).

\subsection*{Tail inequalities for sums of random vectors}
One motivation for our main result comes from the following observations
about sums of random vectors.
Let $a_1,\dotsc,a_n$ be vectors in a Euclidean space, and let $A = [
a_1|\dotsb|a_n ]$ be the matrix with $a_i$ as its $i$th column.
Consider the squared norm of the random sum
\begin{equation} \label{eq:sqnorm}
\|Ax\|^2 = \biggl\| \sum_{i=1}^n a_i x_i \biggr\|^2
\end{equation}
where $x := (x_1,\dotsc,x_n)$ is a martingale difference sequence with
$\E[x_i\ | \ x_1,\dotsc,x_{i-1}] = 0$ and $\E[x_i^2\ | \
x_1,\dotsc,x_{i-1}] = \sigma^2$.
Under mild boundedness assumptions on the $x_i$, the probability
that the squared norm in~\eqref{eq:sqnorm} is much larger than its
expectation
\[
\E[ \|Ax\|^2 ]
= \sigma^2 \sum_{i=1}^n \|a_i\|^2
= \sigma^2 \tr(A^\top A)
\]
falls off exponentially fast.
This can be shown, for instance, using the following lemma by taking $u_i =
a_ix_i$ (the proof is standard, but we give it for completeness in
Appendix~\ref{appendix:martingale}).
\begin{proposition}
\label{proposition:vector-bernstein}
Let $u_1,\dotsc,u_n$ be a martingale difference vector sequence
(\emph{i.e.}, $\E[u_i|u_1,\dotsc,u_{i-1}] = 0$ for all $i=1,\dotsc,n$) such
that
\[
\sum_{i=1}^n \E\bigl[ \|u_i\|^2 \ | \ u_1,\dotsc,u_{i-1} \bigl] \leq v
\quad \text{and} \quad
\|u_i\| \leq b
\]
for all $i=1,\dotsc,n$, almost surely.
For all $t > 0$,
\[
\Pr\left[
\biggl\| \sum_{i=1}^n u_i \biggr\|
> \sqrt{v} + \sqrt{8vt} + (4/3) bt
\right] \leq e^{-t}
.
\]
\end{proposition}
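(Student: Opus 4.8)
The plan is to bound the exponential moment $\E[\exp(\lambda\|S_n\|^2)]$ of the squared norm, where $S_k:=\sum_{i=1}^k u_i$, and then apply Markov's inequality, using the elementary fact that $(\sqrt v+\sqrt{8vt}+\tfrac43 bt)^2\ge v+8vt+\tfrac{16}{9}b^2t^2$ to reduce the claim to showing $\Pr[\|S_n\|^2>v+8vt+\tfrac{16}{9}b^2t^2]\le e^{-t}$. Write $\sigma_k^2:=\E[\|u_k\|^2\mid u_1,\dots,u_{k-1}]$, so that $\sum_{k}\sigma_k^2\le v$, and let $\mathcal F_{k-1}$ denote the $\sigma$-field generated by $u_1,\dots,u_{k-1}$. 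The key step is a one-step estimate obtained by conditioning on $\mathcal F_{k-1}$: from $\|S_k\|^2=\|S_{k-1}\|^2+2\langle S_{k-1},u_k\rangle+\|u_k\|^2$ and the splitting $\|u_k\|^2=\sigma_k^2+(\|u_k\|^2-\sigma_k^2)$,
\[
\E\bigl[\exp(\lambda\|S_k\|^2)\bigm|\mathcal F_{k-1}\bigr]
=\exp\bigl(\lambda\|S_{k-1}\|^2+\lambda\sigma_k^2\bigr)\,
\E\bigl[\exp\bigl(\lambda\bigl(2\langle S_{k-1},u_k\rangle+\|u_k\|^2-\sigma_k^2\bigr)\bigr)\bigm|\mathcal F_{k-1}\bigr].
\]
The trailing factor is the moment generating function of a mean-zero random variable of magnitude at most $2\|S_{k-1}\|b+b^2$ with conditional variance at most $8\|S_{k-1}\|^2\sigma_k^2+2b^2\sigma_k^2$, so a Bernstein-type moment bound controls it by $\exp\bigl(c\,\lambda^2\sigma_k^2(\|S_{k-1}\|^2+b^2)\bigr)$ for an absolute constant $c$, provided $\lambda$ is small enough that the Bernstein bound applies.

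Consequently $\E[\exp(\lambda\|S_k\|^2)\mid\mathcal F_{k-1}]\le\exp\bigl(\lambda(1+c\lambda\sigma_k^2)\|S_{k-1}\|^2+\lambda\sigma_k^2+c\lambda^2b^2\sigma_k^2\bigr)$: the coefficient multiplying $\|S_{k-1}\|^2$ is only inflated from $\lambda$ to $\lambda(1+c\lambda\sigma_k^2)$. Iterating this from $k=n$ back to $k=0$ (and using $S_0=0$), and writing $\lambda_n=\lambda$, $\lambda_{k-1}=\lambda_k(1+c\lambda_k\sigma_k^2)$ for the coefficient at each stage, one has $\lambda_0=\lambda\prod_{k=1}^n(1+c\lambda_k\sigma_k^2)\le\lambda\exp\bigl(c\lambda_0\sum_k\sigma_k^2\bigr)\le\lambda e^{c\lambda_0 v}$, which stays below $2\lambda$ as long as $\lambda=O(1/v)$; and, crucially, the additive contributions sum to $\sum_k(\lambda_k\sigma_k^2+c\lambda_k^2b^2\sigma_k^2)=O(\lambda v+\lambda^2 b^2 v)$, with \emph{no} dependence on $n$, precisely because $\sum_k\sigma_k^2\le v$. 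This yields
\[
\E\bigl[\exp(\lambda\|S_n\|^2)\bigr]\le\exp\bigl(O(\lambda v)+O(\lambda^2 v(v+b^2))\bigr)\qquad\text{for }0<\lambda\lesssim\min\{1/v,\,1/b^2\};
\]
inserting this into $\Pr[\|S_n\|^2>s]\le e^{-\lambda s}\,\E[\exp(\lambda\|S_n\|^2)]$ and optimizing over $\lambda$ gives the usual two-regime Bernstein behaviour, with the interior optimum producing the $\sqrt{8vt}$ term and the endpoint $\lambda\asymp 1/b^2$ producing the $\tfrac43 bt$ term.

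The point that needs care---and which I expect to be the main obstacle---is that each one-step Bernstein bound is valid only while $\lambda\|S_{k-1}\|b$ is bounded, whereas $\|S_{k-1}\|$ is random and a priori unbounded; I would handle this either by running the recursion on the stopped process $S_{k\wedge\tau}$, where $\tau$ is the first time $\|S_k\|$ exceeds the target level (which is inactive on the event $\{\|S_n\|>\text{target}\}$ whose probability we are estimating, hence costs nothing), or by keeping the exact Bernstein moment bound in place of its simplified form, and the precise numerical constants then fall out of the regime split. A technically simpler but quantitatively looser alternative is to apply the one-dimensional Bernstein--Freedman inequality directly to the martingale $W_n:=\sum_k(2\langle S_{k-1},u_k\rangle+\|u_k\|^2-\sigma_k^2)$, which obeys $\|S_n\|^2=W_n+\sum_k\sigma_k^2\le W_n+v$, after stopping at the target level to control its self-normalized conditional variance $\sum_k\|S_{k-1}\|^2\sigma_k^2\le(\max_k\|S_k\|)^2\sum_k\sigma_k^2$; this route gives a bound of the same shape but with worse constants.
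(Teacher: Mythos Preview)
Your approach is genuinely different from the paper's, and considerably harder. The paper does \emph{not} attempt to control the moment generating function of $\|S_n\|^2$. Instead it works with $\|S_n\|$ directly: form the Doob martingale
\[
d_i:=\E\bigl[\|S_n\|\,\big|\,u_1,\dots,u_i\bigr]-\E\bigl[\|S_n\|\,\big|\,u_1,\dots,u_{i-1}\bigr],
\]
so that $\sum_i d_i=\|S_n\|-\E[\|S_n\|]$. A triangle-inequality argument gives $|d_i|\le\|u_i\|+\E[\|u_i\|\mid\mathcal F_{i-1}]\le 2b$ and $\E[d_i^2\mid\mathcal F_{i-1}]\le 4\,\E[\|u_i\|^2\mid\mathcal F_{i-1}]$, whence $\sum_i\E[d_i^2\mid\mathcal F_{i-1}]\le 4v$. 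Scalar Bernstein with variance proxy $4v$ and range $2b$ then yields exactly $\sqrt{2\cdot 4v\cdot t}+(2/3)(2b)t=\sqrt{8vt}+(4/3)bt$ for the deviation $\|S_n\|-\E[\|S_n\|]$, and Jensen plus the orthogonality identity $\E[\|S_n\|^2]=\sum_i\E[\|u_i\|^2]\le v$ gives $\E[\|S_n\|]\le\sqrt v$. Adding these two pieces is the whole proof; there is no recursion, no stopping time, and the constants are immediate.

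Your route, by contrast, is only a sketch with real gaps. The one-step Bernstein bound requires $\lambda\,(2\|S_{k-1}\|b+b^2)$ to be small, which---as you note---fails on the uncontrolled random quantity $\|S_{k-1}\|$; the stopping argument you propose can be made to work in principle, but it is delicate and you do not carry it out. More importantly, every step of your iteration is written with unspecified constants $c$ and $O(\cdot)$, and the final claim that ``the precise numerical constants then fall out of the regime split'' is not substantiated: with the variance and range estimates you write down, the natural Bernstein output would carry extra factors, and it is not at all clear that you recover exactly $\sqrt v+\sqrt{8vt}+(4/3)bt$. Your own alternative via Freedman on $W_n$ you already concede gives worse constants. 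So as it stands the proposal does not establish the proposition as stated; the paper's Doob-martingale-of-the-norm trick is both simpler and what produces the exact constants.
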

After squaring the quantities in the stated probabilistic event,
Proposition~\ref{proposition:vector-bernstein} gives the bound 
\[
\|Ax\|^2 \leq \sigma^2 \cdot \tr(A^\top A)
+ \sigma^2 \cdot O\left(
\tr(A^\top A) (\sqrt{t} + t)
+ \sqrt{\tr(A^\top A)} \max_i \|a_i\| (t + t^{3/2})
+ \max_i \|a_i\|^2 t^2 \right)
\]
with probability at least $1-e^{-t}$ when the $x_i$ are almost surely
bounded by $1$ (or any constant).

Unfortunately, this bound obtained from
Proposition~\ref{proposition:vector-bernstein} can be suboptimal when the
$x_i$ are subgaussian.
For instance, if the $x_i$ are Rademacher random variables, so $\Pr[x_i=+1]
= \Pr[x_i=-1] = 1/2$, then it is known that
\begin{equation} \label{eq:hw}
\|Ax\|^2 \leq \tr(A^\top A)
+ O\left( \sqrt{\tr((A^\top A)^2) t} + \|A\|^2 t \right)
\end{equation}
with probability at least $1-e^{-t}$.
A similar result holds for any subgaussian distribution on the
$x_i$~\citep{HanWri71}.
This is an improvement over the previous bound because the deviation terms
(\emph{i.e.}, those involving $t$) can be significantly smaller, especially
for large $t$.

In this work, we give a simple proof of~\eqref{eq:hw} with explicit
constants that match the analogous bound when the $x_i$ are independent
standard Gaussian random variables.

\section{Positive semidefinite quadratic forms}

Our main theorem, given below, is a generalization of~\eqref{eq:hw}.
\begin{theorem}
\label{theorem:quadratic-subgaussian}
Let $A \in \R^{m \times n}$ be a matrix, and let $\Sig := A^\top A$.
Suppose that $x = (x_1,\dotsc,x_n)$ is a random vector such that, for some
$\mu \in \R^n$ and $\sigma \geq 0$,
\begin{equation} \label{eq:subgaussian}
\E\left[ \exp\left(\alpha^\top(x-\mu)\right) \right]
\leq \exp\left(\|\alpha\|^2 \sigma^2/2\right)
\end{equation}
for all $\alpha \in \R^n$.
For all $t > 0$,
\begin{multline*}
\Pr\Biggl[\ \|Ax\|^2 > \sigma^2 \cdot \Bigl( \tr(\Sig)
+ 2\sqrt{\tr(\Sig^2) t}
+ 2\|\Sig\| t \Bigr)
+ \|A\mu\|^2 \cdot \biggl(1 + 4\biggl( \frac{\|\Sig\|^2}{\tr(\Sig^2)} t
\biggr)^{1/2} + \frac{4\|\Sig\|^2}{\tr(\Sig^2)} t
\biggr)^{1/2}
\ \Biggr] \leq e^{-t}
.
\end{multline*}
\end{theorem}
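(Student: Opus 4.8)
The plan is to control the upper tail by the Chernoff method: for every $\lambda\ge 0$ with $2\lambda\sigma^2\|\Sig\|<1$,
\[
\Pr\bigl[\|Ax\|^2 > s\bigr] \le e^{-\lambda s}\,\E\!\left[e^{\lambda\|Ax\|^2}\right],
\]
so the whole argument reduces to (i) a good bound on the moment generating function $\E[\exp(\lambda\|Ax\|^2)]$ and (ii) a choice of $\lambda$ that, after matching $s$ to the threshold in the statement, makes the exponent at most $-t$. I expect the degenerate case $A=0$ (equivalently $\tr(\Sig^2)=0$) to be dealt with separately by inspection, so I assume $\tr(\Sig^2)>0$ below.

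The heart of the proof is the MGF bound, and the one place the hypothesis~\eqref{eq:subgaussian} gets used. Since~\eqref{eq:subgaussian} only controls \emph{linear} forms in $x$, I would first linearize $\|Ax\|^2$ by introducing an auxiliary standard Gaussian vector $g\in\R^m$, independent of $x$, through the identity $\exp(\lambda\|v\|^2)=\E_g[\exp(\sqrt{2\lambda}\,g^\top v)]$, valid for $\lambda\ge 0$. Taking $v=Ax$, swapping the order of the two expectations (Tonelli, the integrand being positive), and applying~\eqref{eq:subgaussian} with $\alpha=\sqrt{2\lambda}\,A^\top g$ yields
\[
\E_x\!\left[e^{\lambda\|Ax\|^2}\right] \le \E_g\!\left[\exp\!\bigl(\lambda\sigma^2\,g^\top AA^\top g + \sqrt{2\lambda}\,g^\top A\mu\bigr)\right].
\]
The right-hand side is a Gaussian integral of a quadratic-plus-linear exponent. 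I would evaluate it by diagonalizing $AA^\top$ — whose eigenvalues $\lambda_1,\dots,\lambda_m$ coincide with those of $\Sig=A^\top A$, so that $\tr(AA^\top)=\tr(\Sig)$, $\tr((AA^\top)^2)=\tr(\Sig^2)$, and $\|AA^\top\|=\|\Sig\|$ — which factorizes the integral into one-dimensional pieces handled by $\E[\exp(aZ^2+bZ)]=(1-2a)^{-1/2}\exp(b^2/(2(1-2a)))$ for standard Gaussian $Z$ and $a<1/2$. Writing $w$ for $A\mu$ in the eigenbasis (so $\|w\|=\|A\mu\|$), this gives $\log\E_x[e^{\lambda\|Ax\|^2}]\le -\tfrac12\sum_i\log(1-2\lambda\sigma^2\lambda_i)+\lambda\sum_i w_i^2/(1-2\lambda\sigma^2\lambda_i)$; admissibility needs only $2\lambda\sigma^2\|\Sig\|<1$.

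To reach closed form I would use the elementary inequalities $-\tfrac12\log(1-u)\le \tfrac u2+\tfrac{u^2}{4(1-u_{\max})}$ (as in Laurent--Massart~\citep{LauMas00}) together with $1-2\lambda\sigma^2\lambda_i\ge 1-2\lambda\sigma^2\|\Sig\|$, which collapse the estimate to
\[
\log\E_x\!\left[e^{\lambda\|Ax\|^2}\right] \le \lambda\sigma^2\tr(\Sig) + \frac{\lambda^2\sigma^4\tr(\Sig^2) + \lambda\|A\mu\|^2}{1-2\lambda\sigma^2\|\Sig\|}.
\]
Inserting this into the Chernoff bound and taking $\lambda = \sqrt{t}\,/\,(\sigma^2\sqrt{\tr(\Sig^2)} + 2\sigma^2\|\Sig\|\sqrt{t})$ — which is automatically admissible — I would check by direct algebra that the exponent equals exactly $-t$ when $s$ is the threshold in the theorem, using the identity $(1+2\|\Sig\|\sqrt{t/\tr(\Sig^2)})^2 = 1 + 4(\|\Sig\|^2 t/\tr(\Sig^2))^{1/2} + 4\|\Sig\|^2 t/\tr(\Sig^2)$ to recognize the $\|A\mu\|^2$ coefficient in the statement.

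The main obstacle is the linearization step: recognizing that an independent standard Gaussian converts $\|Ax\|^2$ into a linear form to which the subgaussian condition applies, and carrying the linear term $g^\top A\mu$ correctly through the Gaussian integral. Everything afterward is the Laurent--Massart $\chi^2$ computation, and the convenient fact is that the $\lambda$ which is optimal when $\mu=0$ still delivers the stated bound once the $\|A\mu\|^2$ contribution is restored, so no separate optimization over $\lambda$ is needed.
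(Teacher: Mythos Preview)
Your proposal is correct and is essentially the paper's own argument: the Gaussian linearization of $\|Ax\|^2$, the conditional application of the subgaussian hypothesis, the Laurent--Massart-type bound on the resulting Gaussian quadratic-plus-linear moment generating function, and the Chernoff parameter all coincide (your $\lambda$ equals the paper's $\gamma/\sigma^2$, and your explicit MGF bound is exactly the one recorded in the paper's Remark~2). The only cosmetic difference is presentational: the paper casts the Chernoff step as a two-sided estimate on $\E[\exp(\lambda z^\top Ax)]$ and packages the optimization through the function $h_1(a)=1+a-\sqrt{1+2a}$ and its inverse, while you state the MGF bound directly and verify the exponent by hand.
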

\begin{remark}
Note that when $\mu=0$ and $\sigma=1$ we have:
\[
\Pr\left[\ \|Ax\|^2 > \tr(\Sig)
+ 2\sqrt{\tr(\Sig^2) t}
+ 2\|\Sig\| t 
\ \right] \leq e^{-t}
\]
which is the same as Proposition~\ref{proposition:quadratic-gaussian}.
\end{remark}
\begin{remark}
Our proof actually establishes the following upper bounds on the moment
generating function of $\|Ax\|^2$ for $0 \leq \eta <
1/(2\sigma^2\|\Sig\|)$:
\begin{align*}
\E\left[ \exp\left( \eta \|Ax\|^2 \right) \right]
& \leq \E\left[ \exp\left( \sigma^2 \|A^\top z\|^2 \eta
+ \mu^\top A^\top z \sqrt{2\eta} \right) \right]
\\
& \leq \exp\left( \sigma^2\tr(\Sig) \eta + \frac{\sigma^4\tr(\Sig^2)\eta^2 +
\|A\mu\|^2 \eta}{1 - 2\sigma^2\|\Sig\|\eta} \right)
\end{align*}
where $z$ is a vector of $m$ independent standard Gaussian random
variables.
\end{remark}
\begin{proof}[Proof of Theorem~\ref{theorem:quadratic-subgaussian}]
Let $z$ be a vector of $m$ independent standard Gaussian random
variables (sampled independently of $x$).
For any $\alpha \in \R^m$,
\[ \E\left[ \exp\left( z^\top \alpha \right) \right]
= \exp\left( \|\alpha\|^2 / 2 \right)
.
\]
Thus, for any $\lambda \in \R$ and $\veps \geq 0$,
\begin{align}
\E\left[ \exp\left( \lambda z^\top Ax \right) \right]
& \geq \E\left[ \exp\left( \lambda z^\top Ax \right) \ \bigg| \ \|Ax\|^2
> \veps \right] \cdot \Pr\left[ \|Ax\|^2 > \veps \right]
\nonumber \\
& \geq \exp\left( \frac{\lambda^2\veps}2 \right) \cdot \Pr\left[ \|Ax\|^2
> \veps \right]
.
\label{eq:quadratic-form-lb}
\end{align}
Moreover,
\begin{align}
\E\left[ \exp\left( \lambda z^\top Ax \right) \right]
& = \E\left[ \E\left[ \exp\left( \lambda z^\top A(x-\mu) \right) \ \bigg| \
z \right] \exp\left( \lambda z^\top A\mu \right) \right] \nonumber \\
& \leq \E\left[ \exp\left( \frac{\lambda^2 \sigma^2}2 \|A^\top z\|^2 + 
\lambda \mu^\top A^\top z \right) \right]
\label{eq:quadratic-form-ub1}
\end{align}
Let $USV^\top$ be a singular value decomposition of $A$; where $U$ and $V$
are, respectively, matrices of orthonormal left and right singular vectors;
and $S = \diag(\sqrt{\rho_1},\dotsc,\sqrt{\rho_m})$ is the diagonal matrix
of corresponding singular values.
Note that
\[
\|\rho\|_1 = \sum_{i=1}^m \rho_i = \tr(\Sig)
,
\quad
\|\rho\|_2^2 = \sum_{i=1}^m \rho_i^2 = \tr(\Sig^2)
,
\quad \text{and} \quad
\|\rho\|_\infty = \max_i \rho_i = \|\Sig\|
.
\]
By rotational invariance, $y := U^\top z$ is an isotropic multivariate
Gaussian random vector with mean zero.
Therefore $\|A^\top z\|^2 = z^\top U S^2 U^\top z = \rho_1 y_1^2 +
\dotsb + \rho_m y_m^2$ and $\mu^\top A^\top z = \nu^\top y = \nu_1 y_1 +
\dotsb + \nu_m y_m$, where $\nu := SV^\top\mu$ (note that $\|\nu\|^2 =
\|SV^\top\mu\|^2 = \|A\mu\|^2$).
Let $\gamma := \lambda^2\sigma^2/2$.
By Lemma~\ref{lemma:gaussian-cgf},
\begin{equation} \label{eq:quadratic-form-ub2}
\E\left[ \exp\left( \gamma \sum_{i=1}^m \rho_i y_i^2 +
\frac{\sqrt{2\gamma}}{\sigma} \sum_{i=1}^m \nu_i y_i \right) \right]
\leq \exp\left( \|\rho\|_1 \gamma + \frac{\|\rho\|_2^2 \gamma^2 +
\|\nu\|^2 \gamma / \sigma^2}{1 - 2\|\rho\|_\infty \gamma} \right)
\end{equation}
for $0 \leq \gamma < 1/(2\|\rho\|_\infty)$.
Combining~\eqref{eq:quadratic-form-lb}, \eqref{eq:quadratic-form-ub1}, and
\eqref{eq:quadratic-form-ub2} gives
\begin{equation*}
\Pr\left[ \|Ax\|^2 > \veps \right]
\leq \exp\left( - \veps\gamma/\sigma^2 + \|\rho\|_1 \gamma +
\frac{\|\rho\|_2^2 \gamma^2 + \|\nu\|^2 \gamma / \sigma^2}{1 -
2\|\rho\|_\infty \gamma}
\right)
\end{equation*}
for $0 \leq \gamma < 1/(2\|\rho\|_\infty)$ and $\veps \geq 0$.
Choosing
\begin{align*}
\veps := \sigma^2 (\|\rho\|_1 + \tau) + \|\nu\|^2 \sqrt{1 +
\frac{2\|\rho\|_\infty \tau}{\|\rho\|_2^2}}
\quad \text{and} \quad
\gamma := \frac1{2\|\rho\|_\infty} \left(1 -
\sqrt{\frac{\|\rho\|_2^2}{\|\rho\|_2^2 + 2\|\rho\|_\infty \tau}}
\right)
,
\end{align*}
we have
\begin{align*}
\Pr\Biggl[\ \|Ax\|^2 > \sigma^2 (\|\rho\|_1 + \tau)
+ \|\nu\|^2 \sqrt{1 + \frac{2\|\rho\|_\infty \tau}{\|\rho\|_2^2}}
\ \Biggr]
& \leq \exp\Biggl( - \frac{\|\rho\|_2^2}{2\|\rho\|_\infty^2} \Biggl(\ 1 +
\frac{\|\rho\|_\infty \tau}{\|\rho\|_2^2} -\sqrt{1 +
\frac{2\|\rho\|_\infty \tau}{\|\rho\|_2^2}} \ \Biggr) \Biggr)
\\
& = \exp\Biggl( - \frac{\|\rho\|_2^2}{2\|\rho\|_\infty^2} h_1\Biggl(
\frac{\|\rho\|_\infty\tau}{\|\rho\|_2^2} \Biggr) \Biggr)
\end{align*}
where $h_1(a) := 1 + a - \sqrt{1+2a}$, which has the inverse function
$h_1^{-1}(b) = \sqrt{2b} + b$.
The result follows by setting $\tau := 2\sqrt{\|\rho\|_2^2t} +
2\|\rho\|_\infty t = 2\sqrt{\tr(\Sig^2)t} + 2\|\Sig\| t$.
\end{proof}

The following lemma is a standard estimate of the logarithmic moment
generating function of a quadratic form in standard Gaussian random
variables, proved much along the lines of the estimate due
to~\citet{LauMas00}.
\begin{lemma} \label{lemma:gaussian-cgf}
Let $z$ be a vector of $m$ independent standard Gaussian random variables.
Fix any non-negative vector $\alpha \in \R_+^m$ and any vector $\beta \in
\R^m$.
If $0 \leq \lambda < 1/(2\|\alpha\|_\infty)$, then
\[
\log \E\left[ \exp\left( \lambda \sum_{i=1}^m \alpha_i z_i^2 + \sum_{i=1}^m
\beta_i z_i \right) \right]
\leq
\|\alpha\|_1 \lambda + \frac{\|\alpha\|_2^2 \lambda^2 +
\|\beta\|_2^2/2}{1 - 2\|\alpha\|_\infty \lambda}
.
\]
\end{lemma}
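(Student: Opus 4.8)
The plan is to exploit independence of the coordinates to reduce everything to a one-dimensional computation. Since $z_1,\dotsc,z_m$ are independent,
\[
\E\left[ \exp\left( \lambda \sum_{i=1}^m \alpha_i z_i^2 + \sum_{i=1}^m \beta_i z_i \right) \right]
= \prod_{i=1}^m \E\left[ \exp\left( \lambda\alpha_i z_i^2 + \beta_i z_i \right) \right] ,
\]
so it suffices to control $\E[\exp(a Z^2 + b Z)]$ for a single standard Gaussian $Z$ and scalars $a = \lambda\alpha_i \in [0, \lambda\|\alpha\|_\infty) \subseteq [0,1/2)$ and $b = \beta_i$. Completing the square inside the Gaussian integral gives the exact identity
\[
\E\left[ \exp\left( a Z^2 + b Z \right) \right]
= \frac{1}{\sqrt{1-2a}} \exp\left( \frac{b^2}{2(1-2a)} \right) ,
\qquad a < 1/2 ,
\]
and hence, taking logarithms and summing over $i$,
\[
\log \E\left[ \exp\left( \lambda \sum_{i=1}^m \alpha_i z_i^2 + \sum_{i=1}^m \beta_i z_i \right) \right]
= \sum_{i=1}^m \left( -\tfrac12 \log(1 - 2\lambda\alpha_i) + \frac{\beta_i^2}{2(1-2\lambda\alpha_i)} \right) .
\]
The hypothesis $\lambda < 1/(2\|\alpha\|_\infty)$ is exactly what makes every factor $1-2\lambda\alpha_i$ positive, so every term above is well defined.

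Next I would bound each summand. For the quadratic part, set $u_i := 2\lambda\alpha_i \in [0,1)$ and use the power series $-\log(1-u) = \sum_{k\ge1} u^k/k \le u + \tfrac12\sum_{k\ge2}u^k = u + \tfrac{u^2}{2(1-u)}$, which gives
\[
-\tfrac12 \log(1-2\lambda\alpha_i) \le \lambda\alpha_i + \frac{(\lambda\alpha_i)^2}{1-2\lambda\alpha_i} \le \lambda\alpha_i + \frac{(\lambda\alpha_i)^2}{1-2\lambda\|\alpha\|_\infty} ,
\]
where the last step uses $\alpha_i \le \|\alpha\|_\infty$ and $\lambda\ge0$ to shrink the denominator's positive value. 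For the linear part, the same monotonicity yields $\frac{\beta_i^2}{2(1-2\lambda\alpha_i)} \le \frac{\beta_i^2}{2(1-2\lambda\|\alpha\|_\infty)}$.

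Finally, summing these two bounds over $i=1,\dotsc,m$ and recognizing $\sum_i \alpha_i = \|\alpha\|_1$, $\sum_i \alpha_i^2 = \|\alpha\|_2^2$, and $\sum_i \beta_i^2 = \|\beta\|_2^2$ gives
\[
\log \E\left[ \exp\left( \lambda \sum_{i=1}^m \alpha_i z_i^2 + \sum_{i=1}^m \beta_i z_i \right) \right]
\le \|\alpha\|_1 \lambda + \frac{\|\alpha\|_2^2 \lambda^2 + \|\beta\|_2^2/2}{1 - 2\|\alpha\|_\infty \lambda} ,
\]
which is the claimed inequality. There is no serious obstacle here; the only mildly delicate points are the elementary estimate $-\log(1-u) \le u + u^2/(2(1-u))$ (i.e.\ discarding the factors $1/k \le 1/2$ in the logarithm's Taylor series) and keeping careful track of the sign conditions, so that each $1-2\lambda\alpha_i$ is positive and can be replaced by the smaller positive quantity $1-2\|\alpha\|_\infty\lambda$ without reversing an inequality.
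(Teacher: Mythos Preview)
Your proof is correct and follows essentially the same route as the paper: factor by independence, evaluate the one-dimensional Gaussian integral exactly by completing the square, then bound $-\tfrac12\log(1-2\lambda\alpha_i)$ via the Taylor series of $-\log(1-u)$ and bound the $\beta_i^2$ term by replacing each $1-2\lambda\alpha_i$ with the smaller $1-2\lambda\|\alpha\|_\infty$. The paper's presentation differs only cosmetically (it introduces $\eta_i = (1-2\alpha_i\lambda)^{-1/2}$ and carries the series a bit differently), but the argument and the resulting bound are the same.
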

\begin{proof}
Fix $\lambda \in \R$ such that $0 \leq \lambda < 1/(2\|\alpha\|_\infty)$,
and let $\eta_i := 1 / \sqrt{1 - 2\alpha_i\lambda} > 0$ for $i=1,\dotsc,m$.
We have
\begin{align*}
\E\left[ \exp\left( \lambda \alpha_i z_i^2 + \beta_i z_i \right) \right]
& = \int_{-\infty}^\infty
\frac1{\sqrt{2\pi}} \exp\left( - z_i^2/2 \right) \exp\left(
\lambda \alpha_i z_i^2 + \beta_i z_i \right) dz_i \\
& = \eta_i
\exp\left( \frac{\beta_i^2\eta_i^2}2 \right)
\int_{-\infty}^\infty
\frac1{\sqrt{2\pi\eta_i^2}}
\exp\left( - \frac1{2\eta_i^2} \left(z_i - \beta_i\eta_i^2 \right)^2
\right)
dz_i
\end{align*}
so
\begin{align*}
\log \E\left[ \exp\left( \lambda \sum_{i=1}^m \alpha_i z_i^2 + \sum_{i=1}^m
\beta_i z_i \right) \right]
= \frac12 \sum_{i=1}^m \beta_i^2\eta_i^2
+ \frac12 \sum_{i=1}^m \log \eta_i^2
.
\end{align*}
The right-hand side can be bounded using the inequalities
\begin{equation*}
\frac12 \sum_{i=1}^m \log \eta_i^2
= -\frac12 \sum_{i=1}^m \log(1 - 2\alpha_i \lambda)
= \frac12 \sum_{i=1}^m \sum_{j=1}^\infty \frac{(2\alpha_i\lambda)^j}{j}
\leq \|\alpha\|_1 \lambda + \frac{\|\alpha\|_2^2\lambda^2}{1 -
2\|\alpha\|_\infty\lambda}
\end{equation*}
and
\begin{equation*}
\frac12 \sum_{i=1}^m \beta_i^2\eta_i^2
\leq \frac{\|\beta\|_2^2/2}{1 - 2\|\alpha\|_\infty\lambda}
.
\qedhere
\end{equation*}
\end{proof}

\subsubsection*{Example: fixed-design regression with subgaussian noise}
We give a simple application of Theorem~\ref{theorem:quadratic-subgaussian}
to fixed-design linear regression with the ordinary least squares
estimator.

Let $x_1,\dotsc,x_n$ be fixed design vectors in $\R^d$.
Let the responses $y_1,\dotsc,y_n$ be random variables for which there
exists $\sigma > 0$ such that
\[
\E\left[ \exp\left( \sum_{i=1}^n \alpha_i (y_i - \E[y_i]) \right)
\right]
\leq \exp\left( \sigma^2 \sum_{i=1}^n \alpha_i^2 \right)
\]
for any $\alpha_1,\dotsc,\alpha_n \in \R$.
This condition is satisfied, for instance, if
\[ y_i = \E[y_i] + \veps_i \]
for independent subgaussian zero-mean noise variables
$\veps_1,\dotsc,\veps_n$.
Let $\Sig := \sum_{i=1}^n x_ix_i^\top / n$, which we assume is invertible
without loss of generality.
Let
\[
\beta := \Sig^{-1} \left( \frac1n \sum_{i=1}^n x_i\E[y_i] \right)
\]
be the coefficient vector of minimum expected squared error.
The ordinary least squares estimator is given by
\[
\hat\beta := \Sig^{-1} \left( \frac1n \sum_{i=1}^n x_iy_i \right)
.
\]
The excess loss $R(\hat\beta)$ of $\hat\beta$ is the difference between the
expected squared error of $\hat\beta$ and that of $\beta$:
\[
R(\hat\beta)
:= \E\left[ \frac1n \sum_{i=1}^n (x_i^\top\hat\beta-y_i)^2 \right]
- \E\left[ \frac1n \sum_{i=1}^n (x_i^\top\beta-y_i)^2 \right]
.
\]
It is easy to see that
\[
R(\hat\beta)
= \bigl\|\Sig^{1/2} (\hat\beta - \beta) \bigr\|^2
= \Bigl\|\sum_{i=1}^n \bigl(\Sig^{-1/2} x_i\bigr)(y_i-\E[y_i])\Bigr\|^2
.
\]
By Theorem~\ref{theorem:quadratic-subgaussian},
\[
\Pr\left[
R(\hat\beta)
> \frac{\sigma^2 \bigl( d + 2\sqrt{dt} + 2t \bigr)}{n}
\right] \leq e^{-t}
.
\]
Note that in the case that $\E[(y_i - \E[y_i])^2] = \sigma^2$ for
each $i$, then
\[
\E[R(\hat\beta)] = \frac{\sigma^2 d}{n}
;
\]
so the tail inequality above is essentially tight when the $y_i$ are
independent Gaussian random variables.

\bibliography{../mle_linear}
\bibliographystyle{plainnat}

\appendix

\section{Standard tail inequalities}

\subsection{Martingale tail inequalities}
\label{appendix:martingale}

The following is a standard form of Bernstein's inequality stated for
martingale difference sequences.
\begin{lemma}[Bernstein's inequality for martingales]
\label{lemma:bernstein}
Let $d_1,\dotsc,d_n$ be a martingale difference sequence with respect to
random variables $x_1,\dotsc,x_n$ (\emph{i.e.}, $\E[d_i|x_1,\dotsc,x_{i-1}]
= 0$ for all $i = 1,\dotsc,n$) such that $|d_i| \leq b$ and $\sum_{i=1}^n
\E[d_i^2|x_1,\dotsc,x_{i-1}] \leq v$.
For all $t > 0$,
\[
\Pr\left[
\sum_{i=1}^n d_i > \sqrt{2vt} + (2/3)bt
\right] \leq e^{-t}
.
\]
\end{lemma}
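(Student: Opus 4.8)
The plan is to use the standard exponential-moment (Chernoff) method. The one subtlety, compared with an i.i.d.\ sum, is that the conditional variances $V_i := \E[d_i^2 \mid x_1,\dotsc,x_{i-1}]$ are random and only satisfy $\sum_i V_i \le v$ almost surely, so the layer-by-layer control of the moment generating function must be organized as a supermartingale argument rather than a naive iterated expectation.

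First I would record a one-step conditional bound: if $d$ has zero (conditional) mean, $|d| \le b$ almost surely, and (conditional) second moment at most $\sigma^2$, then for $0 \le \theta < 3/b$,
\[
\E\left[ \exp(\theta d) \right] \le \exp\left( \frac{\theta^2 \sigma^2 / 2}{1 - b\theta/3} \right)
.
\]
This is obtained by expanding $\E[\exp(\theta d)] = 1 + \sum_{k \ge 2} \theta^k \E[d^k]/k!$, using $|\E[d^k]| \le \sigma^2 b^{k-2}$ and $k! \ge 2\cdot 3^{k-2}$ to sum the tail as a geometric series, and then applying $1 + u \le e^u$. Writing $\phi(\theta) := (\theta^2/2)/(1 - b\theta/3)$ and applying this conditionally on $x_1,\dotsc,x_{i-1}$ with $\sigma^2 = V_i$ gives $\E[\exp(\theta d_i) \mid x_1,\dotsc,x_{i-1}] \le \exp(\phi(\theta) V_i)$.

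Next, for fixed $0 \le \theta < 3/b$, consider $M_k := \exp\bigl( \theta \sum_{i=1}^k d_i - \phi(\theta) \sum_{i=1}^k V_i \bigr)$ with $M_0 := 1$. The previous step shows $\E[M_k \mid x_1,\dotsc,x_{k-1}] \le M_{k-1}$, so $(M_k)$ is a supermartingale and $\E[M_n] \le 1$. Since $\sum_{i=1}^n V_i \le v$ almost surely, $\exp(\theta \sum_{i=1}^n d_i) \le M_n \exp(\phi(\theta) v)$, hence $\E[\exp(\theta \sum_{i=1}^n d_i)] \le \exp(\phi(\theta) v)$. By Markov's inequality, $\Pr[\sum_{i=1}^n d_i > s] \le \exp(-\theta s + \phi(\theta) v)$ for every $s > 0$.

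Finally I would optimize in $\theta$: taking $\theta := s/(v + bs/3) \in [0, 3/b)$ collapses the exponent to the classical Bernstein form $-s^2/(2v + 2bs/3)$, so $\Pr[\sum_{i=1}^n d_i > s] \le \exp(-s^2/(2v+2bs/3))$. It then remains to check that $s := \sqrt{2vt} + (2/3)bt$ forces $s^2/(2v + 2bs/3) \ge t$, i.e., $s^2 - (2/3)bts - 2vt \ge 0$; this holds because $s$ is at least the positive root $(1/3)bt + \sqrt{(1/9)b^2t^2 + 2vt}$ of that quadratic, using $\sqrt{(1/9)b^2t^2 + 2vt} \le (1/3)bt + \sqrt{2vt}$. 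I expect no genuine obstacle here: the only step needing care is the supermartingale argument, since the random $V_i$ cannot simply be pulled out of an expectation; the rest is a routine series estimate plus an optimization over $\theta$.
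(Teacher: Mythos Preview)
Your proof is correct and follows the classical exponential-moment route to Bernstein's inequality: the one-step bound via $k!\ge 2\cdot 3^{k-2}$ and $|\E[d^k]|\le \sigma^2 b^{k-2}$, the supermartingale $M_k=\exp\bigl(\theta\sum_{i\le k}d_i-\phi(\theta)\sum_{i\le k}V_i\bigr)$ to handle the random conditional variances, the choice $\theta=s/(v+bs/3)$, and the final check that $s=\sqrt{2vt}+(2/3)bt$ dominates the positive root of $s^2-(2/3)bts-2vt$ are all sound.

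There is nothing to compare against, however: the paper does not supply a proof of this lemma. It is stated as ``a standard form of Bernstein's inequality'' and then used as a black box in the proof of Lemma~\ref{lemma:bernstein-l2}. So your write-up is not reproducing the paper's argument but rather filling in a result the authors take for granted; the approach you give is exactly the standard one (essentially Freedman's inequality specialized to the case of an almost-sure bound on the predictable variance), and no shorter or materially different proof is to be expected.
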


The proof of Proposition~\ref{proposition:vector-bernstein}, which is
entirely standard, is an immediate consequence of the following two lemmas
together with Jensen's inequality.
\begin{lemma}
\label{lemma:bernstein-l2}
Let $u_1,\dotsc,u_n$ be random vectors such that
\[
\sum_{i=1}^n \E\left[ \|u_i\|^2 \ | \ u_1,\dotsc,u_{i-1} \right] \leq v
\quad \text{and} \quad
\|u_i\| \leq b
.
\]
for all $i=1,\dotsc,n$, almost surely.
For all $t > 0$,
\[
\Pr\left[\
\biggl\| \sum_{i=1}^n u_i \biggr\|
- \E\biggl[ \biggl\| \sum_{i=1}^n u_i \biggr\| \biggr]
> \sqrt{8vt}
+ (4/3) b t
\ \right] \leq e^{-t}
.
\]
\end{lemma}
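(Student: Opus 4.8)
The plan is to apply the martingale form of Bernstein's inequality (Lemma~\ref{lemma:bernstein}) to the Doob martingale of $Z := \bigl\| \sum_{i=1}^n u_i \bigr\|$. I would take $\mathcal F_k := \sigma(u_1,\dotsc,u_k)$, with $\mathcal F_0$ the trivial $\sigma$-field, and set
\[
D_k := \E[Z \mid \mathcal F_k] - \E[Z \mid \mathcal F_{k-1}], \qquad k = 1,\dotsc,n,
\]
which is a martingale difference sequence with respect to $(\mathcal F_k)$ and satisfies $\sum_{k=1}^n D_k = Z - \E[Z]$. It then remains to establish the almost sure bound $|D_k| \le 2b$ and the conditional-variance bound $\sum_{k=1}^n \E[D_k^2 \mid \mathcal F_{k-1}] \le v$; feeding these into Lemma~\ref{lemma:bernstein} (with its ``$b$'' replaced by $2b$ and its ``$v$'' kept as $v$) yields $\Pr\bigl[\, Z - \E[Z] > \sqrt{2vt} + (4/3)bt \,\bigr] \le e^{-t}$, which implies the stated inequality a fortiori because $\sqrt{8vt} \ge \sqrt{2vt}$.

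Both of the needed bounds would come from comparing $Z$ with the leave-one-out sum $Z_k := \bigl\| \sum_{i \ne k} u_i \bigr\|$. On one hand, $|Z - Z_k| \le \|u_k\| \le b$ almost surely by the triangle inequality. On the other hand, since $Z_k$ does not involve $u_k$, further conditioning on $u_k$ leaves its conditional expectation unchanged, i.e.\ $\E[Z_k \mid \mathcal F_k] = \E[Z_k \mid \mathcal F_{k-1}]$; hence
\[
D_k = \bigl( \E[Z \mid \mathcal F_k] - \E[Z_k \mid \mathcal F_k] \bigr) - \bigl( \E[Z \mid \mathcal F_{k-1}] - \E[Z_k \mid \mathcal F_{k-1}] \bigr),
\]
and as each of the two parenthesised terms is a conditional expectation of $Z - Z_k$ it has absolute value at most $b$, giving $|D_k| \le 2b$. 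The same display exhibits $D_k$ as $W_k - \E[W_k \mid \mathcal F_{k-1}]$ with $W_k := \E[Z - Z_k \mid \mathcal F_k]$, so two applications of Jensen's inequality give
\[
\E[D_k^2 \mid \mathcal F_{k-1}] \le \E[W_k^2 \mid \mathcal F_{k-1}] \le \E\bigl[ (Z - Z_k)^2 \mid \mathcal F_{k-1} \bigr] \le \E\bigl[ \|u_k\|^2 \mid \mathcal F_{k-1} \bigr],
\]
and summing over $k$ bounds $\sum_{k=1}^n \E[D_k^2 \mid \mathcal F_{k-1}]$ by $v$. (A deliberately coarser variance bookkeeping, e.g.\ $\sum_k \E[D_k^2 \mid \mathcal F_{k-1}] \le 4v$, reproduces the $\sqrt{8vt}$ term of the statement verbatim.)

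The step I expect to require the most care is the bounded-difference estimate $|D_k| \le 2b$: the argument above rests entirely on the identity $\E[Z_k \mid \mathcal F_k] = \E[Z_k \mid \mathcal F_{k-1}]$, that is, on the fact that revealing $u_k$ does not alter the joint conditional law of $\{u_i : i \ne k\}$ given $u_1,\dotsc,u_{k-1}$; this is the one place where the structure of the sequence $(u_i)$ is genuinely used. Everything else --- the two Jensen steps and the appeal to Lemma~\ref{lemma:bernstein} --- is routine.
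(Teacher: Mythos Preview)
Your approach is essentially the paper's: form the Doob martingale of $\|s_n\|$, control the increments via the triangle inequality by comparing to the leave-one-out sum $Z_k = \|\sum_{i\ne k} u_i\|$, and invoke Bernstein. The only difference is in the variance bookkeeping: the paper squares the pointwise bound $|D_k| \le \|u_k\| + \E[\|u_k\|\mid\mathcal F_{k-1}]$ to get $\sum_k \E[D_k^2\mid\mathcal F_{k-1}] \le 4v$, which reproduces the $\sqrt{8vt}$ constant exactly, whereas your two Jensen steps give the sharper bound $v$ and hence $\sqrt{2vt}$; you already noted this.

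Your caveat about the identity $\E[Z_k\mid\mathcal F_k] = \E[Z_k\mid\mathcal F_{k-1}]$ is warranted and deserves emphasis rather than a passing remark: the hypotheses of the lemma impose \emph{no} structural relation among the $u_i$ beyond the boundedness and conditional-variance conditions, so nothing in the statement guarantees that revealing $u_k$ leaves the conditional law of $(u_{k+1},\dotsc,u_n)$ unchanged. This is exactly what is needed, and it holds for independent $u_i$ but fails in general (and is not implied even by the martingale-difference property assumed in the surrounding Proposition). The paper's proof uses the same identity without comment --- it silently cancels $\E[\|s_n-u_i\|\mid\mathcal F_i]$ against $\E[\|s_n-u_i\|\mid\mathcal F_{i-1}]$ --- so the gap you flag is real and is shared by the paper's own argument.
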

\begin{proof}
Let $s_n := u_1 + \dotsb + u_n$.
Define the Doob martingale
\[ d_i := \E[ \|s_n\| \ | \ u_1,\dotsc,u_i ] - \E[ \|s_n\| \ | \
u_1,\dotsc,u_{i-1} ] \]
for $i = 1,\dotsc,n$, so $d_1 + \dotsb + d_n = \|s_n\| - \E[\|s_n\|]$.
First, clearly, $\E[d_i | u_1,\dotsc,u_{i-1}] = 0$.
Next, the triangle inequality implies
\begin{align*}
d_i
& = \E\left[ \|(s_n - u_i) + u_i\| \ | \ u_1,\dotsc,u_i\right]
- \E\left[ \|(s_n - u_i) + u_i\| \ | \ u_1,\dotsc,u_{i-1}\right] \\
& \leq \E\left[ \|s_n - u_i\| + \|u_i\| \ | \ u_1,\dotsc,u_i\right]
- \E\left[ \| \|s_n - u_i\| - \|u_i\| \ | \ u_1,\dotsc,u_{i-1}\right] \\
& = \|u_i\| + \E\left[ \|u_i\| \ | \ u_1,\dotsc,u_{i-1}\right]
,
\\
\text{and similarly},
\
d_i
& \geq -\|u_i\| - \E\left[ \|u_i\| \ | \ u_1,\dotsc,u_{i-1}\right]
.
\end{align*}
Therefore,
\[
|d_i|
\leq \|u_i\| + \E\left[ \|u_i\| \ | \ u_1,\dotsc,u_{i-1}\right] \\
\leq 2b
\quad \text{almost surely}
.
\]
Moreover,
\begin{align*}
\E\left[ d_i^2 \ | \ u_1,\dotsc,u_{i-1} \right]
& \leq \E\Bigl[ \|u_i\|^2
+ 2 \cdot \|u_i\| \cdot \E\left[ \|u_i\| \ | \ u_1,\dotsc,u_{i-1} \right]
\\
& \quad\qquad{}
+ \E\left[ \|u_i\| \ | \ u_1,\dotsc,u_{i-1} \right]^2 \ | \
u_1,\dotsc,u_{i-1} \Bigr] \\
& = \E\left[ \|u_i\|^2 \ | \ u_1,\dotsc,u_{i-1} \right]
+ 3 \cdot \E\left[ \|u_i\| \ | \ u_1,\dotsc,u_{i-1} \right]^2 \\
& \leq 4 \cdot \E\left[ \|u_i\|^2 \ | \ u_1,\dotsc,u_{i-1} \right]
,
\\
\text{so} \quad
\sum_{i=1}^n \E\left[ d_i^2 \ | \ u_1,\dotsc,u_{i-1} \right]
& \leq 4v \quad \text{almost surely}
.
\end{align*}
The claim now follows from Bernstein's inequality
(Lemma~\ref{lemma:bernstein}).
\end{proof}

\begin{lemma}
\label{lemma:expected-length}
If $u_1,\dotsc,u_n$ is a martingale difference vector sequence
(\emph{i.e.}, $\E[u_i|u_1,\dotsc,u_{i-1}] = 0$ for all $i=1,\dotsc,n$), then
\[
\E\biggl[ \biggl\| \sum_{i=1}^n u_i \biggr\|^2 \biggr]
= \sum_{i=1}^n \E\left[ \|u_i\|^2 \right]
.
\]
\end{lemma}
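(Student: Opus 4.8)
The plan is to expand the squared norm and use the martingale difference property to kill the cross terms. First I would write, by bilinearity of the inner product,
\[
\biggl\| \sum_{i=1}^n u_i \biggr\|^2 = \sum_{i=1}^n \|u_i\|^2 + 2 \sum_{1 \le i < j \le n} u_i^\top u_j
.
\]
Taking expectations (the relevant quantities are integrable in every application of this lemma, where the summands are almost surely bounded), it then suffices to show that $\E[u_i^\top u_j] = 0$ whenever $i < j$.

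For a fixed pair $i < j$, I would condition on $u_1,\dotsc,u_{j-1}$. Since $i \le j-1$, the vector $u_i$ is a measurable function of $u_1,\dotsc,u_{j-1}$, so it can be pulled out of the conditional expectation:
\[
\E\bigl[ u_i^\top u_j \ | \ u_1,\dotsc,u_{j-1} \bigr]
= u_i^\top \, \E\bigl[ u_j \ | \ u_1,\dotsc,u_{j-1} \bigr]
= u_i^\top \cdot 0 = 0
,
\]
where the last step uses the martingale difference hypothesis. Applying the tower property gives $\E[u_i^\top u_j] = 0$; summing over all $i<j$ and combining with the diagonal terms yields the claimed identity.

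I do not expect any genuine obstacle here; this is a routine second-moment expansion. The only point requiring a little care is the order of conditioning — conditioning on the entire prefix $u_1,\dotsc,u_{j-1}$ rather than merely on $u_i$ — so that the martingale difference property applies to $u_j$ while $u_i$ is treated as a constant.
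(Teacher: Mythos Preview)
Your proposal is correct and uses essentially the same idea as the paper: both arguments kill the cross terms by conditioning on the prefix $u_1,\dotsc,u_{j-1}$ and invoking the martingale difference property via the tower property. The paper presents this as an induction, writing $\E[\|s_n\|^2]=\E[\|u_n\|^2]+\E[\|s_{n-1}\|^2]$ after showing $\E[u_n^\top s_{n-1}\mid u_1,\dotsc,u_{n-1}]=0$, whereas you expand the full square at once; the difference is purely cosmetic.
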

\begin{proof}
Let $s_i := u_1 + \dotsb + u_i$ for $i = 1,\dotsc,n$; we have
\begin{align*}
\E\left[ \|s_n\|^2 \right]
& = \E\left[ \E\left[ \|u_n + s_{n-1}\|^2 \ | \ u_1,\dotsc,u_{n-1}
\right] \right] \\
& = \E\left[ \E\left[ \|u_n\|^2 + 2u_n^\top s_{n-1} + \|s_{n-1}\|^2 \ |
\ u_1,\dotsc,u_{n-1} \right] \right] \\
& = \E\left[ \|u_n\|^2 \right] + \E\left[ \|s_{n-1}\|^2 \right]
\end{align*}
so the claim follows by induction.
\end{proof}

\subsection{Gaussian quadratic forms and $\chi^2$ tail inequalities}
\label{appendix:gaussian}

It is well-known that if $z \sim \mathcal{N}(0,1)$ is a standard Gaussian
random variable, then $z^2$ follows a $\chi^2$ distribution with one degree
of freedom.
The following inequality due to~\citet{LauMas00} gives a bound on linear
combinations of $\chi^2$ random variables.
\begin{lemma}[$\chi^2$ tail inequality; \citealp{LauMas00}]
\label{lemma:chi2-tail}
Let $q_1,\dotsc,q_n$ be independent $\chi^2$ random variables, each
with one degree of freedom.
For any vector $\gamma = (\gamma_1,\dotsc,\gamma_n) \in \R_+^n$ with
non-negative entries, and any $t > 0$,
\[
\Pr\left[ \sum_{i=1}^n \gamma_i q_i > \|\gamma\|_1 +
2\sqrt{\|\gamma\|_2^2 t} + 2\|\gamma\|_\infty t
\right] \leq e^{-t}
.
\]
\end{lemma}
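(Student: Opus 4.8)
The plan is a standard Chernoff argument: bound the logarithmic moment generating function of $S := \sum_{i=1}^n \gamma_i q_i$, apply Markov's inequality, and then optimize the free parameter exactly as in the proof of Theorem~\ref{theorem:quadratic-subgaussian}. We may assume $\gamma \neq 0$, as the claimed event is otherwise empty. Writing $q_i = z_i^2$ for independent standard Gaussians $z_1,\dotsc,z_n$, the random variable $S$ is a Gaussian quadratic form with non-negative coefficient vector $\gamma$ and no linear part, so Lemma~\ref{lemma:gaussian-cgf} (with $\alpha = \gamma$, $\beta = 0$) gives
\[
\log \E\left[ \exp(\lambda S) \right]
\leq \|\gamma\|_1 \lambda + \frac{\|\gamma\|_2^2 \lambda^2}{1 - 2\|\gamma\|_\infty \lambda}
\qquad \text{for } 0 \leq \lambda < \tfrac{1}{2\|\gamma\|_\infty}.
\]
(If one prefers not to invoke that lemma, the same bound follows from $\E[\exp(\lambda\gamma_i q_i)] = (1-2\gamma_i\lambda)^{-1/2}$ and the expansion $-\log(1-u) = \sum_{j\geq1} u^j/j$, bounding the $j\geq 2$ tail by $\sum_i \gamma_i^2\lambda^2/(1-2\gamma_i\lambda) \leq \|\gamma\|_2^2\lambda^2/(1-2\|\gamma\|_\infty\lambda)$ --- precisely the computation inside the proof of Lemma~\ref{lemma:gaussian-cgf}.)

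Next, Markov's inequality applied to $\exp(\lambda S)$ yields, for $\veps \geq 0$ and $0 \leq \lambda < 1/(2\|\gamma\|_\infty)$,
\[
\Pr\left[ S > \veps \right]
\leq \exp\!\left( -\lambda\veps + \|\gamma\|_1\lambda + \frac{\|\gamma\|_2^2\lambda^2}{1 - 2\|\gamma\|_\infty\lambda} \right).
\]
Mirroring the proof of Theorem~\ref{theorem:quadratic-subgaussian}, I would take
\[
\veps := \|\gamma\|_1 + \tau
\qquad \text{and} \qquad
\lambda := \frac{1}{2\|\gamma\|_\infty}\left( 1 - \sqrt{\frac{\|\gamma\|_2^2}{\|\gamma\|_2^2 + 2\|\gamma\|_\infty\tau}} \right) \in \Bigl[0, \tfrac{1}{2\|\gamma\|_\infty}\Bigr),
\]
so that $1 - 2\|\gamma\|_\infty\lambda = \sqrt{\|\gamma\|_2^2/(\|\gamma\|_2^2 + 2\|\gamma\|_\infty\tau)}$. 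Substituting (the $\|\gamma\|_1\lambda$ term cancels against part of $-\lambda\veps$, and the remainder simplifies) gives
\[
\Pr\left[ S > \|\gamma\|_1 + \tau \right]
\leq \exp\!\left( -\frac{\|\gamma\|_2^2}{2\|\gamma\|_\infty^2}\, h_1\!\left( \frac{\|\gamma\|_\infty\tau}{\|\gamma\|_2^2} \right) \right),
\qquad h_1(a) := 1 + a - \sqrt{1+2a},
\]
exactly the bound obtained at the corresponding point of that proof.

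Finally, using $h_1^{-1}(b) = \sqrt{2b} + b$, I would set $\tau := 2\sqrt{\|\gamma\|_2^2 t} + 2\|\gamma\|_\infty t$; then $\|\gamma\|_\infty\tau/\|\gamma\|_2^2 = h_1^{-1}\!\left(2\|\gamma\|_\infty^2 t/\|\gamma\|_2^2\right)$, so the exponent above equals $-\frac{\|\gamma\|_2^2}{2\|\gamma\|_\infty^2}\cdot\frac{2\|\gamma\|_\infty^2 t}{\|\gamma\|_2^2} = -t$, which is the claim. I do not expect any genuine obstacle: the only calculation requiring care --- checking that the chosen $\lambda$ collapses the Chernoff exponent into $-\frac{\|\gamma\|_2^2}{2\|\gamma\|_\infty^2} h_1(\cdot)$ --- is formally identical to one already carried out for Theorem~\ref{theorem:quadratic-subgaussian} (the case $\mu = 0$, $\sigma = 1$), and can be imported essentially verbatim. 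Indeed, the entire lemma is just that special case of the theorem's proof, specialized from $\|Ax\|^2$ to an arbitrary non-negatively weighted sum of one-degree-of-freedom $\chi^2$ variables.
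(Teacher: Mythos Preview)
The paper does not actually prove Lemma~\ref{lemma:chi2-tail}; it is quoted from \citet{LauMas00} and used as a black box in the proof of Proposition~\ref{proposition:quadratic-gaussian}. So there is no ``paper's own proof'' to compare against.

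That said, your argument is correct and is exactly the Laurent--Massart proof, reassembled from pieces already present in this paper: the moment generating function bound is the $\beta=0$ case of Lemma~\ref{lemma:gaussian-cgf}, and the optimization in $\lambda$ (equivalently $\gamma$) with the resulting $h_1$/$h_1^{-1}$ bookkeeping is the $\mu=0$, $\sigma=1$ instance of the computation in the proof of Theorem~\ref{theorem:quadratic-subgaussian}. Your observation that the lemma is a special case of that theorem's proof is accurate, and the algebraic verification you sketched (collapsing the Chernoff exponent to $-\tfrac{\|\gamma\|_2^2}{2\|\gamma\|_\infty^2}\,h_1(\|\gamma\|_\infty\tau/\|\gamma\|_2^2)$ and then inverting $h_1$) goes through without issue.
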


\begin{proof}[Proof of Proposition~\ref{proposition:quadratic-gaussian}]
Let $V \Lambda V^\top$ be an eigen-decomposition of $A^\top A$, where $V$
is a matrix of orthonormal eigenvectors, and $\Lambda :=
\diag(\rho_1,\dotsc,\rho_n)$ is the diagonal matrix of corresponding
eigenvalues $\rho_1,\dotsc,\rho_n$.
By the rotational invariance of the distribution, $z := V^\top x$ is an
isotropic multivariate Gaussian random vector with mean zero.
Thus, $\|Ax\|^2 = z^\top \Lambda z = \rho_1 z_1^2 + \dotsb + \rho_n
z_n^2$, and the $z_i^2$ are independent $\chi^2$ random variables, each
with one degree of freedom.
The claim now follows from a tail bound for $\chi^2$ random variables
(Lemma~\ref{lemma:chi2-tail}, due to~\citealp{LauMas00}).
\end{proof}
\end{document}